%% file: main.tex
\documentclass[10pt,twoside]{article}
\input{preamble.tex}

\setArticleTitleWidth{0.9\textwidth}
\title{The graded Betti numbers are not tropical invariants}
\runningtitle{The graded Betti numbers are not tropical invariants}
\runningauthor{N. Laikin}

\paperauthors{%
  \authorblock{Noah Laikin}{BSc in Mathematics, Durham University\\ Durham, U.K.}%
}

\begin{document}
\maketitle

\begin{abstract}
\noindent We show that the graded Betti numbers are not invariants of the tropicalisation of a homogeneous ideal, or the tropicalisation of a projective algebraic scheme over a valued field. We do this by producing a family of
saturated ideals that all realise the same tropical ideal with one member having a distinct Betti table from the rest. 
\end{abstract}

\section{Introduction}

A motivating question of tropical geometry is \emph{what information held by algebro-geometric objects over a valued field can be extracted from tropicalisation?} The classical setting is the tropicalisation of varieties
over valued fields, as treated in \citep{MaclaganSturmfels2015}.

To recover additional geometric information about these varieties, Giansiracusa-Giansiracusa subsequently defined tropicalisations of algebraic schemes over valued fields in \citep{GiansiracusaGiansiracusa2016} whose rational
points naturally correspond to the tropicalisation of their underlying varieties. Moreover, using properties of tropical linear spaces, Maclagan-Rincón showed in \citep{MaclaganRincon2020} that the tropicalisation of a
projective algebraic scheme over a valued field recovers the tropicalisation of its defining saturated ideal.

Following their introduction by Maclagan-Rinc\'{o}n in \citep{MaclaganRincon2018}, much theory has since been developed for ideals of tropical algebras which are tropical linear spaces, called \emph{tropical ideals}
\citep{MaclaganRincon2018}, \citep{MaclaganRincon2022}, \citep{FinkEtAl2025}, though many questions still remain open. It is known that tropical schemes and their associated tropical ideals contain additional geometric
information tropical varieties discard. In particular, a homogeneous tropical ideal \(J\) has a well-defined Hilbert function which, when \(J\) is realisable, agrees with the Hilbert function of its realisations 
\citep[p.~651]{MaclaganRincon2018}.

It is hence natural to ask if the graded Betti numbers are also numerical invariants of the tropicalisation of an ideal, i.e.\ if the Betti tables of the realisations of a homogeneous tropical ideal all agree. In this paper,
we prove \cref{thm:main}, which when the base field has at least two valuation zero elements gives distinct realisations of a homogeneous tropical ideal with distinct Betti tables, demonstrating that this is false in general.

For notational convenience, fix a field \(k\) and a valuation \(\nu : k\rightarrow \mathbf{R}\cup\{\infty\}\). Let tropicalisation be with respect to \(\nu\), let \(S\coloneq k[x,y,z]\) with its standard grading, and let \(\mathbf{T}\) denote the tropical semiring \((\mathbf{R}\cup\{\infty\},\min,+)\).
We prove the following result: 

\begin{theorem}\label{thm:main}
There is a family of pairwise distinct saturated realisations \((J_{\lambda})_{\nu(\lambda)=0}\) of a homogeneous tropical ideal of \(\mathbf{T}[x,y,z]\) such that the graded Betti
numbers of \(S/J_{1}\) differ from those of \(S/J_{\lambda}\) for all other \(\lambda\).
\end{theorem}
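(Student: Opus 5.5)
The plan is to exploit the fact that the tropical ideal \(\operatorname{trop}(J)\) only records, degree by degree, the valuated matroid of the vector space \(J_d\subseteq S_d\), that is, the valuations of the Plücker coordinates of each \(J_d\). The graded Betti numbers, by contrast, depend on the ranks of the multiplication maps \(S_1\otimes J_{d-1}\to J_d\) and on the higher syzygy maps. These ranks are minors of matrices built from several graded pieces at once, not Plücker coordinates of a single \(J_d\). So I will look for a one-parameter family \(J_\lambda\) with the following properties:
\begin{enumerate}
\item The Hilbert function is constant in \(\lambda\).
\item Every Plücker coordinate of every \((J_\lambda)_d\) is, up to a constant, a product of powers of \(\lambda\) and polynomials in \(\lambda\) with \(\mathbf{Z}\)-coefficients that are units of valuation \(0\) whenever \(\nu(\lambda)=0\).
\item At \(\lambda=1\) a ``ghost'' pair appears in the Betti table: an extra minor generator in some degree \(d+1\), cancelled by an extra first syzygy in the same degree.
\end{enumerate}
Such a cancelling pair leaves the Hilbert function, and hence the dimension of each \(J_d\), unchanged.

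Concretely, I would work with saturated ideals of zero-dimensional (possibly non-reduced) subschemes of \(\mathbf{P}^2\), or with a small codimension-two ideal given by the Hilbert--Burch theorem. The maximal minors of a Hilbert--Burch matrix \(M_\lambda\) then generate \(J_\lambda\), and \(J_\lambda\) is automatically saturated and Cohen--Macaulay. The jump at \(\lambda=1\) is arranged by letting an entry of \(M_\lambda\) in a degree-\(0\) position (for instance \(\lambda-1\)) vanish exactly at \(\lambda=1\). Where that entry is a nonzero unit the resolution is non-minimal and can be pruned; at \(\lambda=1\) it cannot. This gives two Betti tables differing by a consecutive cancellation, and computing both resolutions is routine.

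The main step is to check that the tropicalisation is independent of \(\lambda\) with \(\nu(\lambda)=0\). I would do this in three stages.
\begin{enumerate}
\item Compute, for each degree \(d\), an explicit basis of \((J_\lambda)_d\). For \(d\) at least the regularity this is easy, since \((J_\lambda)_d\) is the kernel of evaluation on the scheme. In the finitely many low degrees I would write out the spans directly.
\item Express each maximal minor of the resulting coefficient matrix as a polynomial in \(\lambda\).
\item Verify that the factor \(\lambda-1\) controlling the Betti jump never divides a nonzero Plücker coordinate. It should enter only through the relation between generators in adjacent degrees, never through a single \((J_\lambda)_d\). All minors should instead have the form \(c\lambda^k\) with \(c\in\mathbf{Z}\), or other expressions of valuation \(0\).
\end{enumerate}
For a field \(k\) with at least two valuation-zero elements, the family is then nontrivial. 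Pairwise distinctness of the \(J_\lambda\) follows by exhibiting an element, such as a generator involving \(\lambda\), lying in one \(J_\lambda\) but not another.

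I expect the genuine obstacle to be the design of the example, namely keeping \(\lambda-1\) invisible to every Plücker coordinate while it still controls a multiplication rank. Point configurations where collinearity is the special condition fail here, because the collinearity determinant is itself a minor of an evaluation matrix and so is seen tropically. For this reason I would first try non-reduced schemes, or ideals where the special condition concerns the interaction between a degree-\(d\) generator and \(S_1\cdot J_{d-1}\). I would confirm the candidate by a Gröbner-basis computation of all \((J_\lambda)_d\) up to the regularity. Beyond that degree, a uniform argument is needed, such as a Gröbner basis whose leading coefficients are units in \(\mathbf{Z}[\lambda^{\pm1}]\), to conclude that all Plücker valuations agree.
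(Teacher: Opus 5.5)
\textbf{The proposal is a search strategy, not a proof.} The theorem is an existence statement, so its whole content is an explicit family plus a verification. You never write down a \(J_\lambda\), and you say yourself that designing the example is the obstacle, so none of your properties (1)--(3) is established.

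Your structural picture is sound. The paper's family \(J_\lambda=(x(x+y+z),\,y(x+\lambda y+z),\,xy^{2})\) is exactly the ideal of maximal minors of the Hilbert--Burch matrix with columns \((y,-x,\lambda-1)^{T}\) and \((y^{2},0,-(x+y+z))^{T}\). For \(\lambda\neq 1\) the unit entry \(\lambda-1\) prunes away, leaving a complete intersection of two quadrics. At \(\lambda=1\) the cancelling pair is \(\beta_{1,3}=\beta_{2,3}=1\).

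However, the heuristic you would use to discard candidates is wrong, and it would steer you away from this example. Its special fibre is precisely a collinearity degeneration: at \(\lambda=1\) the point \([0:1:-\lambda]\) lands on the tangent line \(x+y+z=0\) of the double point at \([-1:0:1]\). Moreover, \(\lambda-1\) is the corresponding \(3\times 3\) minor of the degree-one evaluation matrix. Yet it is invisible tropically; for instance, the \(2\times2\) minors of \((J_\lambda)_2\) in the monomial basis are all \(0\), \(\pm1\) or \(\lambda\). Tropicalisation records only the valuated matroid of each \((J_\lambda)_d\subseteq S_d\) in the monomial basis, not arbitrary minors of an evaluation matrix.

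\textbf{The second gap is uniformity in the degree.} Computing Pl\"ucker coordinates up to the regularity handles only finitely many \(d\). The uniform argument you suggest does not close this. A Gr\"obner basis over \(\mathbf{Z}[\lambda^{\pm1}]\) with unit leading coefficients only gives each \((J_\lambda)_d\) an echelon basis with entries in \(\mathbf{Z}[\lambda^{\pm1}]\). Its Pl\"ucker coordinates are then Laurent polynomials such as \(1+\lambda\), whose valuations can still jump as \(\lambda\) ranges over valuation-zero elements. In other words, it pins down one initial ideal, not the valuated matroid.

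The paper avoids Pl\"ucker coordinates altogether, in two steps.
\begin{enumerate}
\item Because the Hilbert functions agree, the Maclagan--Rinc\'on theorem (a containment of homogeneous tropical ideals with equal Hilbert functions is an equality) reduces everything to \(\mathrm{trop}(J_1)\subseteq\mathrm{trop}(J_\lambda)\).
\item This containment is proved in all degrees at once by an explicit twist. To \(f=p\cdot x(x+y+z)+q\cdot y(x+y+z)+r\cdot xy^{2}\in J_1\), associate \(f_\lambda=p\cdot x(x+y+z)+q(x,\lambda y,z)\cdot y(x+\lambda y+z)+r\cdot xy^{2}\in J_\lambda\).
\end{enumerate}
The twist works for three reasons. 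First, \(f-f_\lambda\) is divisible by \(y^{2}\). Second, on the monomials \(y^{m}z^{n}\), the coefficients of \(f_\lambda\) are those of \(f\) multiplied by \(\lambda^{m-1}\), so their valuations agree when \(\nu(\lambda)=0\). Third, every remaining monomial is a multiple of \(xy^{2}\in J_\lambda\), so its coefficient can be adjusted freely. A degree-independent mechanism of this kind is what your plan is missing.
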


Moreover, since these ideals are saturated, and as established by \citep{MaclaganRincon2020}, homogeneous saturated tropical ideals are in bijective correspondence with their tropical schemes, the graded Betti
numbers are also not generally numerical invariants of the tropicalisation of a scheme. 

We proceed by constructing a family of pairwise distinct homogeneous saturated ideals \((J_{\lambda})_{\lambda\in k}\) of \(S\) defining length-four projective schemes with three non-colinear points which for \(\lambda\neq 0\) lie in
the standard affine chart \(D_{+}(z)\simeq \mathbf{A}^{2}\). We then show that \(S/J_{1}\) has distinct graded Betti numbers from \(S/J_{\lambda}\) for \(\lambda\neq 1\), and that the \(J_{\lambda}\) for which
\(\nu(\lambda)=0\) all realise the same tropical ideal, proving \cref{thm:main}. 

\begin{acknowledgements}
I thank Jeffrey Giansiracusa for suggesting this question, and for taking the time to review my drafts of this paper.  
\end{acknowledgements}

\section{The counterexample}

As mentioned, the counterexample in this paper concerns a family of saturated ideals defining a length four scheme with three non-colinear points in \(\mathbf{P}^{2}\). Up to a homogeneous change of coordinates, any such ideal
has a minimal primary decomposition of the form 
\[
I_{\alpha} \coloneq (x,y)\cap (x,z) \cap (y^{2},\,\alpha y + z), \quad \alpha\in k.
\]
Indeed the intersection of saturated ideals is saturated \citep[Ch.~II, \S2.4, (1)]{Bourbaki1989CommutativeAlgebra}, and moreover a projective scheme \(X\) on \([1:0:0]\) with defining saturated ideal \(\mathcal{J}(X)\) 
is of length two exactly when its stalk \(k[y,z]/\mathcal{J}(X)^{x=1}\) is two-dimensional over \(k\). So we must have \(\mathcal{J}(X)=(y^{2},\,\alpha y+z)\) for some \(\alpha\in k\). Applying Dedekind's identity gives
\[
I_{\alpha}=(\alpha xy+xz,yz,xy^2), \quad \alpha\in k,
\]
and so the \(S/I_{\alpha}\) have two distinct Betti configurations, one when \(\alpha=0\), and another when \(\alpha\neq 0\). In particular, a degree three minimal syzygy is introduced when 
\(\alpha=0\) as then the first two generators share \(z\) as a common factor. 

We can visualise this degeneration by making the homogeneous change of coordinate \(z\mapsto x+(1-\alpha)y+z\) on \(I_{\alpha}\), which away from \(\alpha=1\) lands all points in the affine plane. In particular,
writing \(\lambda=1-\alpha\), we obtain the ideals
\[
J_{\lambda}\coloneq (x^{2}+xy+xz,\,xy+\lambda y^{2}+yz,\,xy^{2}), \quad \lambda\in k,
\]
and applying the change of coordinates to the primary decomposition, we can see that the \(J_{\lambda}\) have the minimal primary decompositions
\[
J_{\lambda}=(x,y)\cap (x,\,x+\lambda y +z) \cap (y^{2},\,x+ y + z), \quad \lambda\in k.
\]
Thus the reduced points of \(\mathrm{Proj}\.(S/J_{\lambda})\) are \([0:0:1]\), \([0:1:-\lambda]\), and its length two point is \([-1:0:1]\). Hence when \(\lambda\neq 0\), the scheme is isomorphic to 
its dehomogenisation \(\mathrm{Spec}\.(k[x,y]/J_{\lambda}^{z=1})\) under the canonical morphism induced by sending \(z\mapsto 1\).

\cref{fig:generic-special-fibre} provides a visualisation of this family of schemes. The lines which are solid rather than dashed at a point cut out its isolated primary component.
We can see that the degree three syzygy at \(\lambda=1\) corresponds to when the direction of the line \(x+\lambda y+1=0\) coincides with the tangent direction \((1,-1)\) of the
isolated primary component supported at \((-1,0)\) indicated by the arrow in each diagram.

\begin{figure}[h]
\caption{Representation of the fibres over the points in \(D(\lambda -1)\) and the special fibre at \(\lambda=1\) of the parametrised family 
\(\mathrm{Spec}\.(k[x,y,\lambda^{\pm 1}]/J_{\lambda}^{z=1})\rightarrow \mathrm{Spec}\.(k[\lambda^{\pm 1}])\)}
\centering
\vspace{0.2em}
\begin{tikzpicture}[
  scale=1.35, 
  scheme line/.style={thick, draw=gray!80},
]


\node[font=\normalfont, inner sep=0pt] at (0,-2.5) {\makebox[0pt][c]{General fibre at a point of \(D(\lambda-1)\)}};

\draw[scheme line] (-2,0) -- (2,0);
\node[anchor = east, inner sep = 0pt, font = \small] at (2,0.2) {$y=0$};

\draw[scheme line] (0,2) -- (0,-2);
\node[inner sep = 0pt, font = \small] at (0,2.2) {$x=0$};

\draw[scheme line] (-2,1) -- (1,-2);
\node[anchor = east, inner sep = 0pt, font = \small] at (2,-1.3) {$x+y+1=0$};

\draw[scheme line] (-0.5,0.5) -- (1,2);
\draw[scheme line, dash pattern = on 3.6pt off 3.6pt] (-0.5,0.5) -- (-2,-1);
\node[anchor = west, inner sep = 0pt, font = \small] at (-2,-1.2) {$x+\lambda y+1=0$};

\node[circle, fill, inner sep = 1.5pt] at (0,0) {};
\node[circle, fill, inner sep = 1.5pt] at (0,1) {};
\node[anchor = west, inner sep = 0pt, font = \small] at (0.2,1) {$(0,-1/\lambda)$};
\node[circle, fill, inner sep = 1.5pt] at (-1,0) {};
\draw[arrows = {-Latex[width=0pt 10, length=10pt]}] (-1,0) -- (-0.6,-0.4);


\node[font=\normalfont, inner sep=0pt] at (5.5,-2.5) {\makebox[0pt][c]{Special fibre at \(\lambda=1\)}};

\draw[scheme line] (3.5,0) -- (7.5,0);
\node[anchor = east, inner sep = 0pt, font = \small] at (7.5,0.2) {$y=0$};

\draw[scheme line] (5.5,2) -- (5.5,-2);
\node[inner sep = 0pt, font = \small] at (5.5,2.2) {$x=0$};

\draw[scheme line] (3.5,1) -- (6.5,-2);
\node[anchor = east, inner sep = 0pt, font = \small] at (7.5,-1.3) {$x+y+1=0$};

\node[circle, fill, inner sep = 1.5pt] at (5.5,0) {};
\node[circle, fill, inner sep = 1.5pt] at (5.5,-1) {};
\node[circle, fill, inner sep = 1.5pt] at (4.5,0) {};
\draw[arrows = {-Latex[width=0pt 10, length=10pt]}] (4.5,0) -- (4.9,-0.4);
\end{tikzpicture}

\label{fig:generic-special-fibre}
\vspace{0.5em}
\end{figure}

The \(J_{\lambda}\) such that \(\nu(\lambda)=0\) are pairwise distinct saturated realisations of a tropical ideal satisfying \cref{thm:main}. The pairwise distinctness follows from the \(\mathrm{Proj}\.(S/J_{\lambda})\) being pairwise distinct for all \(\lambda\in k\).
The saturatedness follows since saturatedness is stable under a homogeneous change of coordinates.

To prove they satisfy the remaining properties of the theorem, we start by computing the Betti table and Hilbert function of \(S/J_{\lambda}\) for all \(\lambda\in k\), verifying that the Betti table is indeed distinct when \(\lambda=1\) and
that the Hilbert functions are all the same. We then use this fact about the Hilbert functions to provide a simple proof that the tropicalisations of the \((J_{\lambda})_{\nu(\lambda)=0}\) are identical, proving the result. 

For notational convenience, we place \(\beta_{i,j}\) in column \(i\), row \(j-i\) of each Betti table.

\begin{lemma}[Betti tables of the \(J_{\lambda}\)]\label{lemma:betti}
Given our indexing convention, when \(\lambda\neq 1\), the quotient module \(S/J_{\lambda}\) has the Betti table 
\[
\begin{array}{c|ccc}
 & 0 & 1 & 2 \\
\hline
0 & 1 & \text{--} & \text{--} \\
1 & \text{--} & 2 & \text{--} \\
2 & \text{--} & \text{--} & 1
\end{array}
\]
Meanwhile, \(S/J_{1}\) has the Betti table
\[
\begin{array}{c|ccc}
 & 0 & 1 & 2 \\
\hline
0 & 1 & \text{--} & \text{--} \\
1 & \text{--} & 2 & 1 \\
2 & \text{--} & 1 & 1
\end{array}\vspace{0.3em}
\]
In particular, for all \(\lambda\in k\), the Hilbert function of \(S/J_{\lambda}\) equals
\[
H_{\lambda}(d) = \begin{dcases}
\.1 & d=0 \\ 
\.3 & d=1 \\ 
\.4 & d\ge 2
\end{dcases}
\]
and so is independent of the value of \(\lambda\).
\end{lemma}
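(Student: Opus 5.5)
The plan is to compute the Hilbert function first and then get the Betti tables from it, using the structure of the two generators
\[
f\coloneq x(x+y+z), \qquad g\coloneq y(x+\lambda y+z),
\]
so that \(J_{\lambda}=(f,g,xy^{2})\). The standing facts are these. \(J_{\lambda}\) is saturated and defines a zero-dimensional scheme of length \(4\). Hence \(S/J_{\lambda}\) is Cohen--Macaulay of dimension \(1\), and by Auslander--Buchsbaum its projective dimension is \(2\). Its Hilbert polynomial is the constant \(4\).

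\textbf{Hilbert function.} In degree \(0\) we get \(1\). In degree \(1\) we get \(3\), because \(J_{\lambda}\) has no linear forms: its generators have degrees \(2,2,3\). In degree \(2\) the quadratic part of \(J_{\lambda}\) is spanned by \(f\) and \(g\). These are linearly independent for every \(\lambda\) (only \(f\) contains \(x^{2}\)), so \(H_{\lambda}(2)=6-2=4\). For \(d\ge 2\) I use that \(S/J_{\lambda}\) is Cohen--Macaulay of dimension one. After choosing a linear nonzerodivisor \(\ell\) (possibly after extending \(k\), which does not change Hilbert functions or Betti numbers), \(H_{\lambda}\) is non-decreasing. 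It is also bounded above by, and eventually equal to, the Hilbert polynomial \(4\). Together with \(H_{\lambda}(2)=4\), this gives \(H_{\lambda}(d)=4\) for all \(d\ge 2\), independent of \(\lambda\). The Hilbert series is therefore \((1+t)^{2}/(1-t)\). Multiplying by \((1-t)^{3}\) gives the numerator
\[
(1-t^{2})^{2}=1-2t^{2}+t^{4},
\]
which constrains the alternating sums \(\sum_i(-1)^i\beta_{i,j}\) in each degree \(j\).

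\textbf{Case \(\lambda\neq 1\).} Here \(f\) and \(g\) have no common factor. The only possible shared factor would be \(x+\lambda y+z\) being proportional to \(x+y+z\), which happens exactly when \(\lambda=1\). So \(f,g\) is a regular sequence, and \((f,g)\) is a complete intersection of degree \(4\) contained in \(J_{\lambda}\). Both ideals are saturated, unmixed of dimension one, and of degree \(4\), so they agree. (Alternatively, write \(xy^{2}\) explicitly in terms of \(f,g\) by a short computation, or compare Hilbert functions of the complete intersection and of \(J_\lambda\).) The Koszul complex \(0\to S(-4)\to S(-2)^{2}\to S\) then yields the first table.

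\textbf{Case \(\lambda=1\).} Now \(f=x\ell\) and \(g=y\ell\) with \(\ell=x+y+z\). This forces \(xy^{2}\notin (f,g)\subseteq(\ell)\), so the cubic generator is minimal and \(\beta_{1,2}=2\), \(\beta_{1,3}=1\). There is the Koszul-type linear syzygy \(yf-xg=0\) in degree \(3\), and no linear syzygy can involve \(xy^{2}\), so \(\beta_{2,3}=1\). Since the projective dimension is \(2\) and \(\operatorname{rank}\) forces \(\beta_{2}=\beta_{1}-1=2\), the remaining syzygy has a degree fixed by the numerator \(1-2t^{2}+t^{4}\). From \(1-2t^{2}-t^{3}+t^{3}+t^{4}\), it must sit in degree \(4\). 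One can also exhibit it directly, for instance as a syzygy of the form \(y\cdot xy^{2}\)-type relation: \(y^{2}\cdot f - xy\cdot g\)-style combinations reduced against \(\ell\cdot xy^{2}\). The Hilbert--Burch theorem then confirms that the \(3\times 2\) matrix with a linear column and a quadratic column has \(2\times 2\) minors recovering \(f,g,xy^{2}\) up to scalar.

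\textbf{Main obstacle.} I expect the one step needing care to be the minimality and exact degree of the second syzygy for \(\lambda=1\). My plan is to rely on the Hilbert-series bookkeeping plus Hilbert--Burch rather than a hand-built resolution, falling back on writing down the explicit \(3\times 2\) matrix if needed.
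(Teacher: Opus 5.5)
Your proof is correct. For \(\lambda\neq 1\) it is essentially the paper's argument; your ``alternative'' of writing \(xy^{2}\) in terms of \(f,g\) is exactly the paper's identity \((1-\lambda)xy^{2}=yf-xg\).

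For \(\lambda=1\) and for the Hilbert function, you run in the opposite direction from the paper. The paper stays elementary and self-contained inside the UFD \(S\). It writes down \((y,-x,0)\) and \((y^{2},0,-\ell)\), with \(\ell=x+y+z\). It then proves by a divisibility argument (first \(\ell\mid r\), then \(x\mid q\)) that these generate the whole first syzygy module, which is therefore visibly free of rank \(2\). The Hilbert function is read off only afterwards, from the Betti numbers.

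You instead import the facts stated before the lemma: \(J_\lambda\) is saturated and of length \(4\), via the primary decomposition. From these you get Cohen--Macaulayness, projective dimension \(2\) by Auslander--Buchsbaum, and \(H_\lambda\) from monotonicity together with \(H_\lambda(2)=4\). After that, the only hand computation for \(\lambda=1\) is in degree \(3\). The count \(\beta_2=2\) comes from the rank, and the degree \(4\) of the second syzygy is forced by the numerator \(1-2t^{2}+t^{4}\).

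The trade-off is this. Your route replaces the explicit syzygy verification with standard homological bookkeeping, and it explains a priori why \(H_\lambda\) cannot depend on \(\lambda\). The paper's route needs neither the primary decomposition nor a (possibly field-extended) linear nonzerodivisor, and it produces the minimal resolution explicitly.

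One correction to your optional aside: the missing syzygy is \(y^{2}f-\ell\cdot xy^{2}=0\), i.e.\ the column \((y^{2},0,-\ell)\). A combination like \(y^{2}f-xy\,g\) equals \(y(yf-xg)\) and is not minimal. With the correct column your Hilbert--Burch check works: the \(2\times 2\) minors are \(f\), \(g\), \(xy^{2}\) up to sign.
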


\begin{proof}
When \(\lambda\neq 1\), we can write 
\[
(1-\lambda) xy^{2}= y\cdot (x^{2}+xy+xz)-x\cdot (xy+\lambda y^{2}+yz),
\]
and so we can write 
\[
J_{\lambda} = (x^{2}+xy+xz,\, xy+\lambda y^{2}+yz).\vspace{0.5em}
\]
This generating set is minimal since the two generators aren't linearly dependent over \(k\). Moreover they don't share a common factor, so the non-zero graded Betti numbers are
\[
\beta_{0,0}=1, \quad \beta_{1,2} = 2, \quad \beta_{2,4}=1,
\]
which gives the desired Betti table. 

We now consider the case where \(\lambda=1\). We can see that \(xy^{2}\) isn't in the \(S\)-linear span of the other two specified generators of \(J_{1}\), and so our
presentation is a minimal generating set. Hence the non-zero graded Betti numbers where \(i\le 1\) are
\[
\beta_{0,0}=1, \quad \beta_{1,2}=2, \quad \beta_{1,3}=1.
\]
Moreover, we have
\[
y\cdot (x^{2}+xy+xz)-x\cdot (xy+y^{2}+yz)=0
\]
and 
\[
y^{2}\cdot (x^{2}+xy+xz)-(x+y+z)\cdot xy^{2}=0,\vspace{0.5em}
\]
and so \((y,-x,0)\) and \((y^{2},0,-(x+y+z))\) are contained in the first syzygy module. They are \(S\)-linearly independent, and so have no syzygies, and hence provided they generate the entire
first syzygy module, the non-zero graded Betti numbers for \(i\ge 2\) are 
\[
\beta_{2,3}=1, \quad \beta_{2,4}=1,
\]
giving the result. Hence to prove that the Betti table of \(S/J_{1}\) is as claimed, it suffices to show that they do in fact generate the entire syzygy module, which we now show: 

Suppose that \(p,q,r\in S\) are such that 
\[
p\cdot (x^{2}+xy+xz)+q\cdot (xy+y^{2}+yz)+r\cdot xy^{2}=0.
\]
Rearranging, we obtain
\[
(x+y+z)\cdot (px+qy)+r\cdot xy^{2}=0,\vspace{0.2em}
\]
which implies \(r=-d\cdot (x+y+z)\) for some \(d\in S\), and we have \vspace{0.2em}
\[
(p-dy^{2})\cdot x+qy=0.\vspace{0.2em}
\]
Hence there exists an \(s\in S\) such that \(p=sy+dy^{2}\), \(q=-sx\), and so\vspace{0.3em}
\[
(p,q,r) = s\cdot (y,-x,0) + d\cdot (y^{2},0,-(x+y+z)),
\]
whence the result. 

The values of the Hilbert function for \(d=0,1,2\) follow from what we've already shown. Moreover, using the standard formula for the Hilbert function in terms of the
graded Betti numbers \citep[Cor.~1.10]{Eisenbud2005Syzygies}, we see that for all \(d>2\):\vspace{0.5em}
\[
\begin{aligned}
H_{\lambda}(d)&=\binom{d+2}{2}-2\cdot \binom{d}{2}+\binom{d-2}{2} \\[1em]
&= \frac{(d+1)(d+2)}{2}-d(d-1)+\frac{(d-2)(d-3)}{2} \\[1em]
               &= \frac{2d^{2}-2d+8}{2}-d^{2}+d = 4,
\end{aligned}
\vspace{0.2em}
\]
the result.
\end{proof}

We now use that they all share the same Hilbert function to prove that the \((J_{\lambda})_{\nu(\lambda)=0}\) all realise the same tropical ideal:

\begin{proposition}
The \((J_{\lambda})_{\nu(\lambda)=0}\) all realise the same tropical ideal, i.e.\ the tropical ideal \(\mathrm{trop}\.(J_{\lambda})\) doesn't depend on the choice of \(\lambda\).
\end{proposition}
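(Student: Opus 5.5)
The plan is to compute the tropical ideal degree by degree. A homogeneous tropical ideal is determined by its graded pieces, and \(\mathrm{trop}\.(J_{\lambda})_{d}=\mathrm{trop}\.((J_{\lambda})_{d})\) is the tropical linear space of the subspace \((J_{\lambda})_{d}\subseteq S_{d}\). That tropical linear space is determined by the valuated matroid of \((J_{\lambda})_{d}\), namely the valuations of its Plücker coordinates in the monomial basis of \(S_{d}\). So it suffices to show that, for every \(d\), these valuations (up to a common additive constant) do not depend on \(\lambda\) when \(\nu(\lambda)=0\). By \cref{lemma:betti}, \((J_{\lambda})_{d}=0\) for \(d\le 1\), so only \(d\ge 2\) matters.

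\textbf{Degree two.} By \cref{lemma:betti}, \((J_{\lambda})_{2}\) is spanned by \(x^{2}+xy+xz\) and \(xy+\lambda y^{2}+yz\). I will write the \(2\times 6\) coefficient matrix and list its \(2\times 2\) minors directly. A quick check shows that every minor is \(0\), \(\pm 1\) or \(\pm\lambda\). In particular the expression \(1-\lambda\) never occurs. Hence all Plücker valuations lie in \(\{0,\infty\}\), with a support that is independent of \(\lambda\).

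\textbf{Degree \(d\ge 2\) in general.} Rather than work with generators, I will describe \((J_{\lambda})_{d}\) as a kernel. The primary decomposition identifies four linear functionals on \(S_{d}\):
\[
f\mapsto f(0,0,1),\qquad f\mapsto f(0,1,-\lambda),\qquad f\mapsto f(-1,0,1),\qquad f\mapsto (\partial_{y}f-\partial_{x}f)(-1,0,1).
\]
The last one is the derivative along the tangent direction \((-1,1,0)\) of the component \((y^{2},x+y+z)\). Their common kernel contains \((J_{\lambda})_{d}\), and by \cref{lemma:betti} both spaces have codimension \(4\) for \(d\ge 2\), so they coincide.

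Since the valuated matroid of a subspace is the dual of that of its orthogonal complement, it suffices to control the \(4\times 4\) minors of the \(4\times\binom{d+2}{2}\) evaluation matrix \(M_{\lambda}\). Its columns are indexed by the monomials \(x^{a}y^{b}z^{c}\), and its entries are explicit:
\begin{itemize}
\item row 1 is nonzero only at \(z^{d}\);
\item row 2 is \((-\lambda)^{c}\) when \(a=0\), and zero otherwise;
\item row 3 is \((-1)^{a}\) when \(b=0\), and zero otherwise;
\item row 4 is \(\pm a\) when \(b=0\), \(\pm 1\) when \(b=1\), and zero otherwise.
\end{itemize}
Thus \(\lambda\) enters only through row 2, and only on the columns \(y^{b}z^{d-b}\).

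\textbf{Key step.} I will show that every \(4\times 4\) minor of \(M_{\lambda}\) has the form \(c\cdot\lambda^{m}\), where \(c\in k\) is a constant independent of \(\lambda\) (an integer image) and \(m\ge 0\). Given this, \(\nu\) of each minor equals \(\nu(c)\) when \(\nu(\lambda)=0\), which is independent of \(\lambda\), and the result follows.

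I will prove the key step by case analysis on how many chosen columns have \(a=0\).
\begin{itemize}
\item Among the \(a=0\) columns, only \(z^{d}\) and \(yz^{d-1}\) have nonzero entries outside row 2. Hence two chosen columns \(y^{b}z^{d-b}\) with \(b\ge 2\) would be proportional, forcing the minor to vanish.
\item So the chosen columns include at most \(z^{d}\), \(yz^{d-1}\) and one further pure \(y,z\) column, together with columns having \(a\ge 1\), on which row 2 vanishes.
\item Expanding along row 2 then leaves at most three terms, in \(\lambda^{d}\), \(\lambda^{d-1}\) and \(\lambda^{d-b}\).
\end{itemize}
The main obstacle is checking that these terms never combine into a genuine non-monomial expression such as \(\lambda^{d}-\lambda^{d-1}\). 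Such an expression could have positive valuation for particular \(\lambda\) with \(\nu(\lambda)=0\), for instance \(\lambda=1\), which is the case \cref{lemma:betti} singles out.

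To handle this, I will first use the sparse rows 1 and 3 to pin down the cofactors. Row 1 forces the column \(z^{d}\) to be used against row 1 whenever it is present. This kills its row-2 cofactor, and a similar argument applies to \(yz^{d-1}\) via row 4. If this subcase analysis fails, my fallback is to find a monomial (torus) rescaling of \(x,y,z\), combined with rescaling the rows, that transforms \(M_{\lambda}\) into \(M_{1}\)-like data up to unit factors. Such a rescaling would directly show that the Plücker valuations agree.
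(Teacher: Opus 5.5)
Your proposal is correct, and it takes a genuinely different route from the paper.

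\textbf{How the two routes differ.} The paper never computes Pl\"ucker coordinates. It first uses the equality of Hilbert functions together with Maclagan--Rinc\'on's Thm.~3.11 (an inclusion of tropical ideals with equal Hilbert functions is an equality). This reduces the claim to $\mathrm{trop}(J_1)\subseteq\mathrm{trop}(J_\lambda)$. Then, for each $f=p\,(x^2+xy+xz)+q\,(xy+y^2+yz)+r\,xy^2\in J_1$, it exhibits
$f_\lambda=p\,(x^2+xy+xz)+q(x,\lambda y,z)\,(xy+\lambda y^2+yz)+r\,xy^2\in J_\lambda$, and checks three things:
\begin{itemize}
\item $f-f_\lambda$ is divisible by $y^2$;
\item modulo $x$, corresponding coefficients differ by powers of $\lambda$;
\item monomials divisible by $xy^2$ can be adjusted freely, since $xy^2\in J_\lambda$.
\end{itemize}

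Your argument instead treats whole graded pieces through valuated matroids and matroid duality. It does not need Thm.~3.11; the Hilbert function is used only to identify the kernel. It also yields more: up to scaling, the Pl\"ucker vector of $(J_\lambda)_d$ is $(c_B\lambda^{m_B})_B$ with $\lambda$-free constants $c_B$ (images of integers). This makes transparent why no factor $1-\lambda$, and hence no trace of the jump at $\lambda=1$, can appear. The paper's argument is shorter and stays at the level of individual polynomials.

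\textbf{Two points need tightening.}

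First, the codimension step runs the wrong way. The lemma gives only $\mathrm{codim}\,(J_\lambda)_d=4$. From $(J_\lambda)_d\subseteq\ker M_\lambda$ you get $\mathrm{codim}\ker M_\lambda\le 4$, which is not what you need. You must also check $\mathrm{rank}\,M_\lambda=4$. For instance, the minor on the columns $z^d$, $yz^{d-1}$, $xz^{d-1}$, $xyz^{d-2}$ is $\pm\lambda^{d-1}\neq 0$. Alternatively, identify the kernel with $(J_\lambda)_d$ directly from the primary decomposition.

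Second, the key step goes through and no fallback is needed, but not via row 4. Row 4 is nonzero on every column $x^az^{d-a}$ and $x^ayz^{d-a-1}$, so it cannot be what kills the row-2 cofactor of $yz^{d-1}$. The right mechanism is as follows.
\begin{itemize}
\item A nonzero minor must use $z^d$, and expanding along row 1 discards its entry $(-\lambda)^d$.
\item Among the remaining three columns, a column $y^bz^{d-b}$ with $b\ge 2$ is supported on row 2 alone. Expanding along it gives $\pm\lambda^{d-b}$ times a $\lambda$-free $2\times 2$ minor from rows 3 and 4.
\item If no such column is present, row 2 has at most the single entry $(-\lambda)^{d-1}$, coming from $yz^{d-1}$.
\end{itemize}
Hence every maximal minor is an integer times a power of $\lambda$, and your separate degree-two computation becomes redundant.
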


\begin{proof}
By \cref{lemma:betti}, the Hilbert functions of the \(J_{\lambda}\) coincide, so the Hilbert functions of their tropicalisations also coincide. Hence by \citep[Thm.~3.11]{MaclaganRincon2018}, it suffices to show that
\[
\mathrm{trop}\.(J_{1})\subseteq \mathrm{trop}\.(J_{\lambda}) \quad \text{for all \(\lambda\) such that \(\nu(\lambda)=0\),}
\]
that is, for each \(f\in J_{1}\), to find a polynomial in \(J_{\lambda}\) which has the same tropicalisation. Since \(xy^{2}\in J_{\lambda}\), we can reduce to showing there is a
polynomial in \(J_{\lambda}\) whose coefficients of monomials not divisible by \(xy^{2}\) have the same valuations as those of \(f\). Writing 
\[
f\coloneq p\cdot (x^{2}+xy+xz) + q\cdot (xy+ y^{2}+yz) + r\cdot xy^{2}, \quad p,q,r\in S,
\]
we show that the polynomial
\[
f_{\lambda}\coloneq p\cdot (x^{2}+xy+xz) + q(x,\lambda y,z)\cdot (xy+\lambda y^{2}+yz) + r\cdot xy^{2}\in \.J_{\lambda}
\]
is satisfactory. Note that they share the same coefficient valuations on monomials not divisible by \(y^{2}\) since we have that
\(
q-q(x,\lambda y ,z) 
\)
is divisible by \(y\) and so 
\[
f-f_{\lambda} = y\cdot (x+\lambda y+z)\cdot (q-q(x,\lambda y,z))+y^{2}\cdot (1-\lambda)\cdot q
\]
is divisible by \(y^{2}\). Hence it remains to show that they share the same \pagebreak coefficient valuations on monomials not divisible by \(x\). Reducing mod \(x\) implies we can reduce to showing
that
\[
q(0,y,z)\cdot(y^{2}+yz), \quad q(0,\lambda y,z)\cdot (\lambda y^{2}+yz)
\]
have the same tropicalisation. Denote these polynomials by \(s\) and \(t\) respectively. Writing\vspace{0.3em}
\[
q(0,y,z)\coloneq\sum_{i,j\ge 0} c_{ij}y^{i}z^{j},\vspace{0.2em}
\]
and \(c_{ij}=0\) otherwise, the coefficient of \(y^{m}z^{n}\) in \(s\) is
\[
c_{m-2,n}+c_{m-1,n-1},
\]
whilst the coefficient of \(y^{m}z^{n}\) in \(t\) is
\[
\lambda^{m-1}\cdot (c_{m-2,n}+c_{m-1,n-1}).
\]
Since \(\nu(\lambda)=0\), these coefficients have the same valuation, which implies the result.
\end{proof} 

\bibliographystyle{plainnat}
\bibliography{references}
\end{document}

%% file: preamble.tex
\usepackage{iftex}
\ifPDFTeX
  \PackageError{journal-preamble}{Compile with LuaLaTeX or XeLaTeX}{This preamble uses fontspec and unicode-math.}
\fi

\usepackage{fontspec}

\usepackage[
  paperwidth=220mm,
  paperheight=293.5mm,
  twoside,
  inner=34mm,
  outer=51mm,
  top=42.5mm,
  textheight=193mm,
  headheight=14.7pt,
  headsep=15.75pt,
  footskip=30pt
]{geometry}

\usepackage{microtype}
\usepackage{tikz}
\usetikzlibrary{arrows.meta,calc}
\usepackage[labelfont=bf, justification=centering]{caption}
\usepackage{amsmath,mathtools}
\usepackage{unicode-math}

\usepackage{amsthm}
\usepackage{tikz-3dplot}
\usepackage{enumitem}
\usepackage{changepage}
\usepackage{titlesec}
\usepackage{fancyhdr}
\usepackage[numbers,sort&compress]{natbib}
\usepackage{xcolor}
\usepackage[colorlinks=true,linktoc=page,linkcolor=blue,citecolor=blue,urlcolor=blue]{hyperref}
\usepackage{aliascnt}
\usepackage[capitalise, noabbrev]{cleveref}

\makeatletter
\renewcommand\normalsize{%
  \@setfontsize\normalsize{10pt}{14pt}%
  \abovedisplayskip 10pt plus 2pt minus 1pt%
  \abovedisplayshortskip 6pt plus 2pt minus 1pt%
  \belowdisplayshortskip \abovedisplayshortskip
  \belowdisplayskip \abovedisplayskip
}
\renewcommand\small{\@setfontsize\small{9pt}{13.1pt}}
\renewcommand\footnotesize{\@setfontsize\footnotesize{8pt}{10pt}}
\renewcommand\scriptsize{\@setfontsize\scriptsize{7pt}{9pt}}
\renewcommand\tiny{\@setfontsize\tiny{5pt}{6pt}}
\renewcommand\large{\@setfontsize\large{12pt}{14pt}}
\renewcommand\Large{\@setfontsize\Large{17.28pt}{20pt}}
\makeatother
\normalsize

\makeatletter
\newcommand{\paper@runningtitle}{}
\newcommand{\paper@runningauthor}{}
\newcommand{\runningtitle}[1]{\gdef\paper@runningtitle{#1}}
\newcommand{\runningauthor}[1]{\gdef\paper@runningauthor{#1}}
\makeatother

\titleformat{\section}[block]
  {\normalfont\bfseries\centering}
  {\thesection.}{0.5em}{}
\titlespacing*{\section}{0pt}{36pt plus 4pt minus 4pt}{9.5pt plus 2pt minus 2pt}

\titleformat{\subsection}[block]
  {\normalfont\bfseries}
  {\thesubsection.}{0.5em}{}
\titlespacing*{\subsection}{0pt}{31pt plus 4pt minus 4pt}{8pt plus 2pt minus 2pt}

\titleformat{\subsubsection}[block]
  {\normalfont\bfseries}
  {\thesubsubsection.}{0.5em}{}
\titlespacing*{\subsubsection}{0pt}{24pt plus 3pt minus 3pt}{6pt plus 2pt minus 2pt}

\titleformat{\paragraph}[runin]
  {\normalfont\itshape}
  {}{0pt}{}[.]
\titlespacing*{\paragraph}{\parindent}{12pt}{0.6em}

\makeatletter
\newlength{\tocsideinset}
\renewcommand{\@pnumwidth}{1.55em}
\renewcommand{\@tocrmarg}{2.55em}
\renewcommand{\@dotsep}{4.5}
\renewcommand*{\l@section}{\@dottedtocline{1}{0pt}{18pt}}
\renewcommand*{\l@subsection}{\@dottedtocline{2}{13pt}{23pt}}
\renewcommand*{\l@subsubsection}{\@dottedtocline{3}{26pt}{32pt}}
\newcommand{\makepapercontents}{%
  \par\vspace{0pt}%
  {\centering\bfseries Contents\par}%
  \vspace{5pt}%
  \begingroup
    \small
    \setlength{\parskip}{0pt}%
    \begin{adjustwidth}{\tocsideinset}{\tocsideinset}
      \@starttoc{toc}%
    \end{adjustwidth}
  \endgroup
}
\makeatother

\makeatletter
\AtBeginDocument{%
}
\makeatother

\makeatletter
\newif\ifshowtitleinfo
\showtitleinfotrue

\newcommand{\paper@journalline}{}
\newcommand{\paper@doiline}{}
\newcommand{\paper@rightsline}{}
\newcommand{\paper@authors}{}
\newcommand{\journalLine}[1]{\gdef\paper@journalline{#1}}
\newcommand{\doiLine}[1]{\gdef\paper@doiline{#1}}
\newcommand{\rightsLine}[1]{\gdef\paper@rightsline{#1}}
\newcommand{\paperauthors}[1]{\gdef\paper@authors{#1}}

\providecommand{\DefaultArticleTitleWidth}{\textwidth}
\newlength{\ArticleTitleWidth}
\newcommand{\setArticleTitleWidth}[1]{\setlength{\ArticleTitleWidth}{#1}}

\newcommand{\paper@printtitle}{%
  {\Large\selectfont
    \@tempdima=\textwidth
    \advance\@tempdima by -\ArticleTitleWidth
    \divide\@tempdima by 2
    \leftskip=\@tempdima plus 1fil
    \rightskip=\@tempdima plus 1fil
    \parfillskip=0pt
    \parindent=0pt
    \@title\par}%
}
\newcommand{\authorblock}[2]{%
  \begin{minipage}[t]{0.5\textwidth}
    \centering
    {\normalfont\scshape #1\par}%
    \vspace{2.4mm}%
    {\fontsize{7pt}{9pt}\selectfont #2\par}%
  \end{minipage}%
}
\renewcommand{\maketitle}{%
  \begingroup
  \thispagestyle{empty}%
  \setlength{\parindent}{0pt}%
  \vspace*{-13mm}%
  \ifshowtitleinfo
    {\fontsize{8pt}{10pt}\selectfont
      \paper@journalline\par
      \paper@doiline\par
      \paper@rightsline\par}%
  \else
    \vspace*{10.5mm}
  \fi
  \vspace*{16mm}%
  \paper@printtitle
  \vspace{6mm}%
  {\centering\large by\par}%
  \vspace{3.1mm}%
  {\centering\normalfont\paper@authors\par}%
  \vspace{12.4mm}%
  \endgroup
}
\makeatother

\makeatletter
\newtheoremstyle{mainclaim}
  {14pt plus 2pt minus 2pt}
  {14pt plus 2pt minus 2pt}
  {\itshape}
  {\parindent}
  {\normalfont}
  {}
  {0.5em}
  {\thmname{\textsc{#1}}\thmnumber{ \textup{#2.}}\thmnote{ \textup{(#3)}}}

\newtheoremstyle{plainnote}
  {14pt plus 2pt minus 2pt}
  {14pt plus 2pt minus 2pt}
  {\normalfont}
  {\parindent}
  {\normalfont}
  {}
  {0.5em}
  {\thmname{\textit{#1}}\thmnumber{\textup{ #2.}}\thmnote{ \textup{(#3)}}}
\makeatother

\newtheoremstyle{acknowledgement}
  {14pt plus 2pt minus 2pt}
  {14pt plus 2pt minus 2pt}
  {\normalfont\setlength{\parindent}{0pt}}
  {0pt}
  {\normalfont\bfseries}
  {}
  {0.5em}
  {\thmname{#1}.}
\makeatother

\theoremstyle{mainclaim}
\newtheorem{theorem}{Theorem}[section]
\crefname{theorem}{Theorem}{Theorems}
\Crefname{theorem}{Theorem}{Theorems}

\newcommand{\newaliastheorem}[3]{%
  \newaliascnt{#1}{theorem}%
  \newtheorem{#1}[#1]{#2}%
  \aliascntresetthe{#1}%
  \crefname{#1}{#2}{#3}%
  \Crefname{#1}{#2}{#3}%
}

\newaliastheorem{proposition}{Proposition}{Propositions}
\newaliastheorem{lemma}{Lemma}{Lemmas}
\newaliastheorem{corollary}{Corollary}{Corollaries}

\theoremstyle{plainnote}
\newaliastheorem{definition}{Definition}{Definitions}
\newaliastheorem{remark}{Remark}{Remarks}
\newaliastheorem{example}{Example}{Examples}
\newaliastheorem{convention}{Convention}{Conventions}

\newtheorem*{remark*}{Remark}
\newtheorem*{example*}{Example}

\theoremstyle{acknowledgement}
\newtheorem*{acknowledgements}{Acknowledgements}

\setlist[enumerate,1]{label=(\roman*),leftmargin=40pt,itemsep=0pt,topsep=2pt,parsep=0pt}
\setlist[enumerate,2]{label=(\alph*),leftmargin=24pt,itemsep=0pt,topsep=2pt,parsep=0pt}
\renewcommand{\.}{\hspace{0.05em}}
\usepackage{comment}